\let\origsection=\section \def\section{\@ifstar{\origsection*}{\mysection}} 
\def\mysection{\@startsection{section}{1}\z@{.7\linespacing\@plus\linespacing}{.5\linespacing}{\normalfont\scshape\centering\S}}
\renewcommand{\PrintDOI}[1]{\doi{#1}}
\def\gs{\geqslant}
\def\ls{\leqslant}
\def\EE#1{E(#1)}
\def\VV#1{V(#1)}
\let\sm=\setminus
\newtheorem{fact}{Fact}\newtheorem{thm}[fact]{Theorem}
\newtheorem{cor}[fact]{Corollary}
\numberwithin{equation}{section}
\begin{document}
\title[Decomposing graphs into forests]
{Nash-Williams' theorem on decomposing graphs into forests}
\author{Christian Reiher}
\address{Fachbereich Mathematik, Universit\"at Hamburg, Bundesstra\ss e 55, 20144 Hamburg, Germany}
\email{Christian.Reiher@uni-hamburg.de}
\author{Lisa Sauermann}
\address{Department of Mathematics, Stanford University, 450 Serra Mall, 
Building 380, Stanford CA 94305, USA}
\email{lsauerma@stanford.edu}         
\begin{abstract}
We give a simple graph-theoretic proof of a classical result due to {\sc C. St. J. A. Nash-Williams} on covering graphs by forests. Moreover we derive a slight generalisation of this statement where some edges are preassigned to distinct forests. 
\end{abstract}

\keywords{Nash-William's theorem, forest covering, arboricity}

\maketitle

\section{Introduction}\label{sec:intro}
All graphs considered in this note are finite. The sets of vertices and edges of a graph 
$G$ are denoted by $V(G)$ and $E(G)$, respectively. 
The {\it restriction} of a graph $G=(V, E)$ to a subset $X$ of $V$, i.e., the graph 
on $X$ whose edges are precisely those members of $E$ both of whose ends belong to $X$, 
is indicated by $G|X$. When $G$ is clear from the context, we write $e(X)$ for the number 
of edges of that graph. For any integer $r\ge 0$ we set $[r]=\{1, 2, \ldots, r\}$.

The following result is due to {\sc C. St. J. A. Nash--Williams} (see \cite{NW2}, and the 
related articles \cites{NW1, Tut} as well as \cite{China} for another simple proof).

\begin{thm} \label{thm:1} If $G=(V, E)$ is a graph, and $r\ge 0$ is an integer such that for all 
nonempty subsets $X$ of $V$ one has $e(X)\le r(|X|-1)$, then there exists a 
partition $E=E_1\cup E_2\cup\ldots\cup E_r$ such that $(V, E_i)$ is a forest for 
$i\in[r]$.
\end{thm}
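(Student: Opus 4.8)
The plan is to argue by extremality. Consider a subset $F\subseteq E$ of maximum cardinality that admits a partition $F=F_1\cup\dots\cup F_r$ into forests, and suppose towards a contradiction that $F\neq E$. Fix an edge $e_0=u_0v_0\in E\setminus F$. If $e_0$ could be added to some $F_i$ without creating a cycle, then $F\cup\{e_0\}$ would be a larger set with the desired property; so in every $F_i$ the endpoints $u_0$ and $v_0$ already lie in a common tree. The whole difficulty is to show that, even allowing edges to be moved between the forests, one cannot make room for $e_0$, and that the only way this can happen is if the hypothesis is violated on a suitable set.

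To organise the possible rearrangements I would run an augmenting search by \emph{exchange reachability}. Declare $e_0$ reachable, and repeatedly apply the following rule: if $h$ is a reachable edge and $i\in[r]$ is an index with $h\notin F_i$ for which adding $h$ to $F_i$ does \emph{not} create a cycle, then an augmenting sequence exists and we can enlarge $F$, a contradiction; otherwise, adding $h=ab$ to $F_i$ closes the fundamental cycle through the unique $a$--$b$ path $P_i(h)$ in $F_i$, and we declare all edges of $P_i(h)$ reachable. Since edges are only ever added, this process terminates with a closed set $R$ of reachable edges for which no augmentation is available. The key structural consequence of closedness is that for every reachable edge $h=ab$ and every $i\in[r]$, the endpoints $a$ and $b$ are joined by a path of $F_i$ all of whose vertices lie in $X:=V\big(R\cup\{e_0\}\big)$ (trivially via $h$ itself when $h\in F_i$, and via $P_i(h)\subseteq R$ otherwise).

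From here I would extract the contradiction. First, the reachable edges form a connected subgraph spanning $X$, because each newly reachable path $P_i(h)$ attaches to the structure already built at the endpoints of $h$; starting from $e_0$, this grows a connected set. Combining this with the previous paragraph and transitivity of connectivity, every $F_i|X$ is connected, and being the restriction of a forest it is a spanning tree of $X$, hence has exactly $|X|-1$ edges. As the $F_i$ are pairwise disjoint and $u_0,v_0\in X$ with $e_0\in E\setminus F$, the set $X$ is nonempty and satisfies $e(X)\ge r(|X|-1)+1$, contradicting the hypothesis $e(X)\le r(|X|-1)$. This forces $F=E$, completing the proof.

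The step I expect to be the main obstacle is the bookkeeping of the augmenting search: making precise what an \emph{augmenting sequence} is, verifying that a free slot for some reachable edge really can be propagated backwards along the chain of exchanges so as to insert $e_0$ while keeping every $F_i$ a forest, and proving the closedness property that pins down $X$. Once these are in place, the connectivity argument and the final count are immediate.
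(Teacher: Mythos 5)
Your overall strategy---take a maximum $F\subseteq E$ that decomposes into $r$ forests, run an exchange-reachability search from a missing edge $e_0$, and extract from the closed reachable set a vertex set $X$ on which every $F_i$ restricts to a spanning tree, so that $e(X)\ge r(|X|-1)+1$---is sound, and it is a genuinely different route from the paper's. Your closure property, the connectivity of the reachable edges on $X$, the conclusion that each $F_i|X$ is a spanning tree of $X$ with $|X|-1$ edges, and the final count (using that $e_0\in E\setminus F$ also lies in $G|X$) are all correct as stated. This is essentially the augmenting-search proof in the spirit of matroid union (and of the short proof of Chen et al.\ cited in the paper). The paper instead deletes one edge $e=ab$, chooses among all forest-partitions of $E\setminus\{e\}$ one minimizing the component $C$ of the first forest containing $a$, and reaches a contradiction with a single exchange (or one simultaneous pair of exchanges) guided by its Fact~\ref{fact:3}; that extremal choice is exactly what lets it avoid any multi-step augmentation.

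The genuine gap is the one you flag yourself, and it is not mere bookkeeping: for an \emph{arbitrary} reachability chain $e_0=h_0,h_1,\dots,h_m$ with $h_{t+1}\in P_{i_t}(h_t)$, the backward propagation can fail. The exchange at step $t$ removes $h_{t+1}$ from $F_{i_t}$ and inserts $h_t$, and this keeps $F_{i_t}$ a forest only if $h_{t+1}$ still lies on the fundamental cycle of $h_t$ in the \emph{current} version of $F_{i_t}$, which may already have been altered by exchanges at steps $s>t$ with $i_s=i_t$. In particular, if some $h_{s+1}$ with $s>t$ also lies on $P_{i_t}(h_t)$ (a ``shortcut'' in the exchange digraph), the relevant fundamental path is destroyed or rerouted before you get to use it, and the resulting edge set need not be a forest. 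The standard repair is to take a \emph{shortest} such chain (the first free slot found by breadth-first search): then there is no arc $h_t\to h_{s+1}$ with $s+1>t+1$, so each original path $P_{i_t}(h_t)$ survives all removals performed in $F_{i_t}$ at later indices of the chain, remains the unique path in the modified forest between the ends of $h_t$, and the induction on $t$ goes through. Without this choice and this preservation lemma, the augmentation step is unproven---and with it the assertion that in the terminal situation every reachable edge is blocked in every $F_i$, which is what makes $X$ well defined and underpins the whole counting argument.
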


It is plain that the sufficient condition for such a partition to exist given here is 
also necessary. Also, the cases $r=0$ and $r=1$ of this statement are immediate, and the 
case $r=2$ was recently posed at the All-Russian Mathematical Olympiad, \cite{Russen}. 
That case  can be dealt with by some peculiar tricks not discussed here and that do not 
straightforwardly generalize to $r>2$, but which nevertheless motivated us to reprove 
the general case independently. In fact, it turned out that our arguments for the case 
$r=2$ yielded slightly more, namely that for any two distinct edges of $G$ there 
exists such a partition in which one of the edges belongs to $E_1$ while the other one 
belongs to $E_2$. Hence one might guess:

\begin{cor}\label{cor:2}
	Given a graph $G=(V, E)$, an integer $r\gs 0$ such that for all nonempty 
	subsets $X$ of $V$ one has $e(X)\ls r(|X|-1)$, and moreover a sequence 
	$e_1, e_2, \ldots, e_{r}$ of  distinct edges of $G$, there exists a partition 
	$E=E_1\cup E_2\cup\ldots\cup E_r$ such that $e_i\in E_i$ for $i\in[r]$ and $(V, E_i)$ 
	is a forest for all $i\in[r]$.
\end{cor}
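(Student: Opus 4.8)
The plan is to deduce Corollary~\ref{cor:2} from Theorem~\ref{thm:1} by induction on $r$, peeling off one forest at a time. The cases $r\ls 1$ are trivial, so I assume $r\gs 2$ and that the statement of Corollary~\ref{cor:2} already holds for $r-1$. It then suffices to produce a single forest $F\subseteq E$ with $e_r\in F$ and $e_1,\dots,e_{r-1}\notin F$ such that $(V,E\sm F)$ satisfies the hypothesis with $r-1$ in place of $r$, that is, $e_{E\sm F}(X)\ls (r-1)(|X|-1)$ for every nonempty $X\subseteq V$. Indeed, granting such an $F$, the inductive hypothesis applied to $(V,E\sm F)$ together with the edges $e_1,\dots,e_{r-1}$ yields a partition $E\sm F=E_1\cup\dots\cup E_{r-1}$ into forests with $e_i\in E_i$; putting $E_r:=F$ then completes the partition, because $e_r\in F$.

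For the density bookkeeping, write $e_F(X)$ for the number of edges of $F$ with both ends in $X$. Since $e(X)\ls r(|X|-1)$ always holds, the inequality we want for $E\sm F$, namely $e(X)-e_F(X)\ls(r-1)(|X|-1)$, is equivalent to $e_F(X)\gs e(X)-(r-1)(|X|-1)$, which is automatic unless $X$ is (nearly) tight and thereby forces $F$ to contain a spanning tree of $G|X$. Admissible forests are plentiful: applying Theorem~\ref{thm:1} to $G$ produces a partition $E=F_1\cup\dots\cup F_r$ into forests, and for each $i$ the complement $E\sm F_i$ is a union of $r-1$ forests, so it automatically satisfies the $(r-1)$-density condition (the easy, necessary direction already noted after Theorem~\ref{thm:1}). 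Hence every colour class $F_i$ is an admissible candidate for $F$ as far as density is concerned, and the only thing left to arrange is the membership constraint.

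To meet that constraint I relabel so that $e_r\in F_r$ and set $F:=F_r$. If some $e_i$ with $i<r$ also lies in $F_r$, the idea is to evict it by a cycle exchange: move $e_i$ into another class $F_j$ whenever $F_j+e_i$ stays acyclic, and otherwise swap $e_i$ against an edge of the unique cycle of $F_j+e_i$ and continue along the resulting augmenting sequence, all the while keeping $e_r$ frozen inside $F_r$. Carrying this out for $e_1,\dots,e_{r-1}$ in turn should produce a decomposition in which $F_r$ contains $e_r$ and none of the other preassigned edges and whose complement is still a union of $r-1$ forests, which is exactly the forest $F$ required above.

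I expect this exchange step to be the main obstacle: one must check that all preassigned edges can be evicted from $F_r$ simultaneously, without any augmenting sequence ever re-depositing a previously evicted $e_j$ back into $F_r$. The slack needed is supplied by a short density count: for an edge set $A$ disjoint from $\{e_1,\dots,e_r\}$, enlarging $A$ to $A'$ by adjoining exactly those $e_i$ lying in the cycle space spanned by $A$ does not change its rank, so the hypothesis in its equivalent form $|A'|\ls r\cdot\mathrm{rank}(A')$ applied to $A'$ gives precisely $|A|\ls r(\mathrm{rank}(A)-1)+\#\{i:e_i\notin\mathrm{span}(A)\}$, which is the inequality guaranteeing that the exchanges close up with every constraint satisfied. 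Turning this count into a clean driver for the exchanges — or, equivalently, recognising the admissible first colour classes as the bases of an appropriate matroid — is the crux of the argument.
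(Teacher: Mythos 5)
Your reduction is sound as far as it goes: if one can produce a forest $F\subseteq E$ with $e_r\in F$ and $e_1,\dots,e_{r-1}\notin F$ such that $E\sm F$ satisfies the density condition with $r-1$ in place of $r$, then induction on $r$ finishes the proof, and your preliminary observations (every colour class of a Theorem~\ref{thm:1} decomposition is density-admissible; the deficiency count $|A|\ls r(\mathrm{rank}(A)-1)+\#\{i\colon e_i\notin\mathrm{span}(A)\}$ for $A$ disjoint from the prescribed edges) are correct. But the existence of such an $F$ is exactly where all of the content of the corollary beyond Theorem~\ref{thm:1} lives, and you do not prove it. You flag the exchange step yourself as ``the crux'': nothing in the proposal shows that the eviction process terminates, that an augmenting sequence never re-deposits a previously evicted $e_j$ into $F_r$ or expels $e_r$, or that the admissible first colour classes form the bases of a matroid --- that last claim would itself require identifying the matroid and verifying an exchange axiom, which is not done. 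The deficiency inequality is never connected to the exchange process, so as written the argument restates the problem in an equivalent form and leaves that form unproved.

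For comparison, the paper's deduction avoids peeling off a forest altogether and thereby sidesteps precisely the difficulty you ran into. It fixes an $r$-forest partition supplied by Theorem~\ref{thm:1} and satisfies the constraints $e_i\in E_i$ one at a time: if $e_{k+1}\in E_\ell$ with $\ell\neq k+1$ and the ends of $e_{k+1}$ lie in one component of $(V,E_{k+1})$, then some edge $e$ of the $E_{k+1}$-path between these ends joins different components of $(V,E_\ell\sm\{e_{k+1}\})$, and swapping $e$ and $e_{k+1}$ between the two classes keeps both acyclic. The point that makes this induction close --- and that replaces your worry about re-deposited edges --- is that the swap cannot disturb any already satisfied constraint: the edge leaving $E_\ell$ is $e_{k+1}\neq e_\ell$, and the edge leaving $E_{k+1}$ lies in $E_{k+1}$, hence equals none of $e_1,\dots,e_k$. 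If you want to salvage your plan, running this single-swap argument is also the cleanest way to manufacture your forest $F$; but once you have it, the detour through $F$ and the induction on $r$ are no longer needed.
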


As we shall see in Section~\ref{sec:3}, this can in fact be derived from Theorem~\ref{thm:1}.

All statements and arguments contained in this article are valid irrespective of whether 
multiple edges are allowed to occur in our graphs or not.

\section{Proving Theorem~\ref{thm:1}} \label{sec:2}

In this section we give a simple proof of Theorem~\ref{thm:1} that is, to the best of our 
knowledge, new. For this purpose we need some preparation. Let $T$ be a 
forest, $k\geq 2$ an integer, and $T_1, T_2, \ldots ,T_k$ 
mutually vertex disjoint, connected subgraphs of $T$. 
Obviously $T_1, T_2, \ldots T_k$ are trees and each of them 
is contained in exactly one component of $T$. We call $T_i$ 
{\it isolated} in $T$ if there is no $T_j$ with $j\neq i$, that is 
contained in the same component of $T$ as $T_i$. Furthermore we call 
$T_i$ {\it peculiar} in $T$ if there is an edge 
$e_i\in E(T)\sm\bigl(E(T_1)\cup \dots \cup E(T_k)\bigr)$ incident with a 
vertex of $T_i$, such that $T_i$ is isolated in $T-e_i$ 
(the reader may notice that one gets an equivalent notion without demanding $e_i$ to be 
incident with a vertex of $T_i$).

\begin{fact}\label{fact:3}
At least two of the subgraphs $T_1$, $T_2$, \dots ,$T_k$ 
are isolated or peculiar in $T$.
\end{fact}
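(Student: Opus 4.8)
The plan is to argue componentwise, exploiting the fact that whether a given $T_i$ is isolated or peculiar in $T$ depends only on the component of $T$ containing $T_i$ together with whichever other $T_j$'s happen to lie in that same component. For isolation this is immediate from the definition, and for peculiarity it follows because a witnessing edge $e_i$ lies in the same component as $T_i$ and its removal affects no other component. I would therefore split into two cases according to how $T_1,\ldots,T_k$ are distributed among the components of $T$. The easy case is when every component of $T$ contains at most one of these subgraphs: since the $k\ge 2$ vertex-disjoint subgraphs then occupy $k$ distinct components, each of them is isolated, and in particular at least two are, as required.

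The substantial case is when some component $C$ of $T$ contains at least two of the $T_i$'s, say $T_{i_1},\ldots,T_{i_m}$ with $m\ge 2$. Here I would contract each of these $m$ subgraphs to a single vertex, obtaining a tree $C'$ (contracting a connected subgraph of a tree again yields a tree) with distinguished vertices $v_1,\ldots,v_m$, one for each contracted $T_{i_j}$; note that the edges of $C'$ are precisely the edges of $C$ lying in none of the $T_{i_j}$. I would then consider the minimal subtree $S\subseteq C'$ containing all of $v_1,\ldots,v_m$. Every leaf of $S$ must be one of the $v_j$, since otherwise deleting that leaf would produce a smaller subtree still containing all distinguished vertices. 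As $S$ has at least $m\ge 2$ vertices, it has at least two leaves, so at least two distinguished vertices, say $v_a$ and $v_b$, are leaves of $S$.

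It then remains to check that a distinguished leaf $v_a$ witnesses peculiarity of $T_{i_a}$. Let $e_a$ be the unique edge of $S$ incident with $v_a$. Regarded as an edge of $T$, it lies in no $T_j$ and is incident with a vertex of $T_{i_a}$. Deleting $e_a$ splits the tree $C'$ into two parts; since every path in $C'$ from $v_a$ to another distinguished vertex lies inside $S$ and hence begins with $e_a$, all distinguished vertices other than $v_a$ fall on the far side. Thus $T_{i_a}$ is isolated in $T-e_a$, i.e.\ peculiar, and applying this to both $v_a$ and $v_b$ yields two peculiar subgraphs, finishing this case.

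I expect the main obstacle to be precisely the verification in the last step, namely that removing $e_a$ genuinely separates $T_{i_a}$ from every other $T_{i_j}$ sitting in $C$. This is exactly where the choice of $S$ as the \emph{minimal} subtree spanning the distinguished vertices, together with the observation that all its leaves are distinguished, does the real work. The contraction also disposes of the incidence requirement in the definition of peculiarity for free, since $e_a$ is incident with $v_a$ by construction.
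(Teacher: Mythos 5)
Your proof is correct, but it takes a genuinely different route from the paper's. The paper argues by minimal counterexample: it reduces to the case where $T$ is a single tree, considers a leaf $x$ of $T$, and uses minimality (one cannot delete $x$ to get a smaller counterexample) to force some $T_i$ to consist solely of $x$, whose unique incident edge then witnesses peculiarity; since a tree on at least two vertices has at least two leaves, two peculiar subgraphs are found. You instead give a direct, induction-free construction: contract each of the $T_{i_j}$ lying in a component $C$ that contains at least two of them, pass to the minimal subtree $S$ spanning the contracted vertices, and observe that every leaf of $S$ is a contracted vertex whose unique $S$-edge separates it from all the others. Your argument is fully explicit about the two points the paper leaves implicit, namely that isolation and peculiarity are determined within a single component of $T$, and exactly which edge witnesses peculiarity; the price is the (harmless) contraction formalism and the Steiner-subtree detour. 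The paper's proof is shorter on the page but asks the reader to verify that the minimality argument really goes through, e.g.\ that isolated-or-peculiar status transfers back to $T$ after deleting a leaf that lies in no $T_i$. Both proofs ultimately rest on the same elementary fact that a tree with at least two vertices has at least two leaves. One small point worth making explicit in your write-up: when you say the edges of $C'$ are the edges of $C$ lying in none of the $T_{i_j}$, you are implicitly using that $T_{i_1},\dots,T_{i_m}$ are \emph{all} of the $T_i$ meeting $C$, so that such edges indeed lie in none of $T_1,\dots,T_k$, as the definition of peculiarity requires.
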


\begin{proof} 
Otherwise take a counterexample where $T$ has as few vertices as possible. 
If no component of $T$ contains two or more of the $T_i$, 
then each them is isolated and we are done. In the remaining cases $T$ 
is a tree. Consider any leaf $x$ of $T$. As we cannot produce a 
smaller counterexample by deleting $x$ thus, there has to be some $T_i$ consisting 
solely of~$x$, and the edge of $T$ incident with $x$ witnesses that this 
$T_i$ is peculiar. Because of $k\geq 2$ the tree $T$ has at least 
two vertices and, consequently, at least two leaves. Applying the foregoing argument to 
any two leaves of $T$ we see that at least two of the trees 
$T_1$, $T_2$, \dots , and $T_k$ are peculiar. 
\end{proof}

\begin{proof}[Proof of Theorem~\ref{thm:1}.]
Arguing indirectly we choose a graph $G=(V,E)$ and an integer $r\gs 0$ 
contradicting Theorem~\ref{thm:1} with $\vert E\vert$ minimal. Then $\vert E\vert\neq 0$. 
Let $e=ab$ be an arbitrary edge of $G$.

Because of the choice of $G$, there is at least one partition 
$E\sm\{e\}=E_1'\cup E_2'\cup \dots \cup E_r'$ such that $(V, E_i')$ is a forest for 
$i=1,2,\dots ,r$. For each of these partitions we consider the component $(C, E_C)$ 
of $(V, E_1')$ containing the vertex $a$. 
From now on let 
\[
	E\sm\lbrace e\rbrace=E_1'\cup E_2'\cup \dots \cup E_r'
\]
be one of these 
partitions with $\vert C\vert$ minimum. Let $\overline{C}=\EE{G\vert C}$.

If $b\not\in C$, the partition $E=(E_1'\cup \lbrace e\rbrace)\cup E_2'\cup \dots \cup E_r'$ 
would satisfy all conditions of Theorem~\ref{thm:1}, consequently $b\in C$ and $e\in \overline{C}$. 
Therefore
\[
	\vert E_1'\cap \overline{C}\vert+\dots+\vert E_r'\cap \overline{C}\vert
	<\vert \overline{C}\vert=e(C)\ls r(\vert C\vert-1)\,.
\]
Thus, there is an $i\in [r]$ with $\vert E_i'\cap \overline{C}\vert<\vert C\vert-1$. 
This implies, that $(C, E_i'\cap \overline{C})$ is not connected. 
Because of the definition of $(C, E_C)$ we have $i\neq 1$, so we can w.l.o.g. assume~$i=2$.

Let $D_1$, \dots, $D_k$ be the connected components of 
$(C, E_2'\cap \overline{C})$, where obviously $k\geq 2$. Thus, 
$D_1$, \dots, $D_k$ are mutually vertex disjoint, connected subgraphs 
of the forest $(V, E_2')$. We define isolation  and peculiarity in $(V, E_2')$ 
as applying to these subgraphs. By Fact~\ref{fact:3} at least two of the subgraphs 
$D_1$, \dots, $D_k$ are isolated or peculiar in $(V, E_2')$, and at 
most one of them contains $a$. Let w.l.o.g. $D_1$ be isolated or peculiar 
in $(V, E_2')$ and $a\not\in \VV{D_1}$.

If $D_1$ is peculiar in $(V, E_2')$, there is an edge
\[
	e_1\in E_2'\sm\bigl(\EE{D_1}\cup \dots \cup \EE{D_k}\bigr)
\]
incident with a vertex $v_1$ of $D_1$, such that $D_1$ is 
isolated in $(V, E_2'\sm\lbrace e_1\rbrace)$. Notice that
\[
	\EE{D_1}\cup \dots \cup \EE{D_k}=\overline{C}\cap E_2'
\]
yields $e_1\not\in \overline{C}$, wherefore $e_1$ connects $v_1$ and a vertex not in $C$.

If $D_1$ is isolated in $(V, E_2')$, let $v_1$ be an arbitrary vertex of $D_1$.

We consider the uniquely determined path from $a$ to $v_1$ in the tree $(C, E_C)$. 
Since $a\not\in \VV{D_1}$ and $v_1\in \VV{D_1}$, this path 
contains an edge $e_d$ connecting a vertex of $D_1$ with a vertex of some 
$D_i$ with $i\neq 1$.

First, we consider the case when $D_1$ is isolated in $(V, E_2')$. 
Then the graph $(V, E_2'\cup \lbrace e_d\rbrace)$ is a forest, because $e_d$ 
connects different components of $(V, E_2')$. Obviously, the graph 
$(V, E_1'\sm \lbrace e_d\rbrace)$ is also a forest and its component including $a$ 
is a subgraph of $(C, E_C)$. But this subgraph does not contain $v_1$ and has 
therefore a number of vertices smaller than $\vert C\vert$. This contradicts
\[
	E\sm\lbrace e\rbrace=(E_1'\sm \lbrace e_d\rbrace)\cup (E_2'
	\cup \lbrace e_d\rbrace)\cup \dots \cup E_r'
\]
being one of the partitions considered at the beginning.

For the second case let $D_1$ now be peculiar in $(V, E_2')$. Then $D_1$ is isolated in 
$(V, E_2'\sm \lbrace e_1\rbrace)$ and the graph 
$\bigl(V, (E_2'\sm\lbrace e_1\rbrace)\cup \lbrace e_d\rbrace\bigr)$ is therefore a forest. 
On the other hand the graph $(V, E_1'\sm\lbrace e_d\rbrace)$ is also a forest and a subgraph 
of the forest $(V, E_1')$. Because $e_d$ belongs to the component $(C, E_C)$ of $(V, E_1')$, 
the graph $(V, E_1'\sm\lbrace e_d\rbrace)$ has two components being subgraphs of $(C, E_C)$, 
one of which contains $a$ and the other one $v_1$. The edge $e_1$ connects~$v_1$ and a vertex 
not in $C$, consequently the graph 
$\bigl(V, (E_1'\sm\lbrace e_d\rbrace)\cup \lbrace e_1\rbrace\bigr)$ is 
a forest and its component including $a$ is equal to the component of 
$(V, E_1'-\lbrace e_d\rbrace)$ including $a$. 
Thus, its number of vertices is smaller than $\vert C\vert$. 
This contradicts 
\[
	E\sm\lbrace e\rbrace=\bigl((E_1'\sm \lbrace e_d\rbrace)\cup \lbrace e_1\rbrace\bigr)
	\cup \bigl((E_2'\sm\lbrace e_1\rbrace)\cup \lbrace e_d\rbrace\bigr)
	\cup \dots \cup E_r'
\]
being one of the partitions considered at the beginning.

\begin{figure}[ht]
\centering
\begin{tikzpicture}[scale=1.2]
	\coordinate (x) at (0.1,1);
	\coordinate (y) at (0, 5);
	
	\coordinate (r1) at (2, 5.5);
	\coordinate (r2) at (2, 4.5);
	\coordinate (r3) at (5, 5.5);
	\coordinate (r4) at (5.8,4.5);
	\coordinate (r5) at (1,1.4);
	\coordinate (a)  at (1,0.6);
    \coordinate (r6) at (2.2,1.5);
    \coordinate (r7) at (2.1,0.8);	
    \coordinate (r8) at (4,0.6);	
    \coordinate (r9) at (5.6,0.9);	
    \coordinate (v1) at (5.2,1.5);		
		
	\draw[line width=1pt] (y) -- (r2);
	\draw[line width=1pt] (r1) -- (r3);
	\draw[line width=1pt] (r2) -- (r3);
	\draw[line width=1pt] (r4) -- (r2);
	\draw[line width=1pt] (x) -- (r5) -- (a) -- (r7) -- (r6);
	\draw[line width=1pt] (r7) -- (r8) -- (r9);
	\draw[line width=1pt] (r8) -- (v1);
	
	\draw[green, line width=1pt] (y) -- (r1) -- (r2);
	\draw[green, line width=1pt] (r1) -- (r4) -- (r3);
	\draw[green, line width=1pt] (r9) -- (v1) -- (r6) -- (r8);
	\draw[green, line width=1pt] (a) -- (x) -- (y);
	\draw[green, line width=1pt] (r5) -- (r6);
	\draw[green, line width=1pt] (v1) -- (r4);
	\draw[green, line width=1pt] (x) -- (r7);
	
	\foreach \i in {x, y, r1, r2, r3, r4, r5, a, r6, r7, r8, r9, v1}
			\fill  (\i) circle (2pt);
		
	\node at (5.9,3) {$e_1$};
	\node at (3.2, 0.3) {$e_d$};
	\node at (1,0.2) {$a$};
	\node at (5.6, 1.6) {$v_1$};
	\node at (-1.3, 3) {$C$};		
\end{tikzpicture}
\caption{$D_1$ is peculiar}
\label{fig:1}
\end{figure}
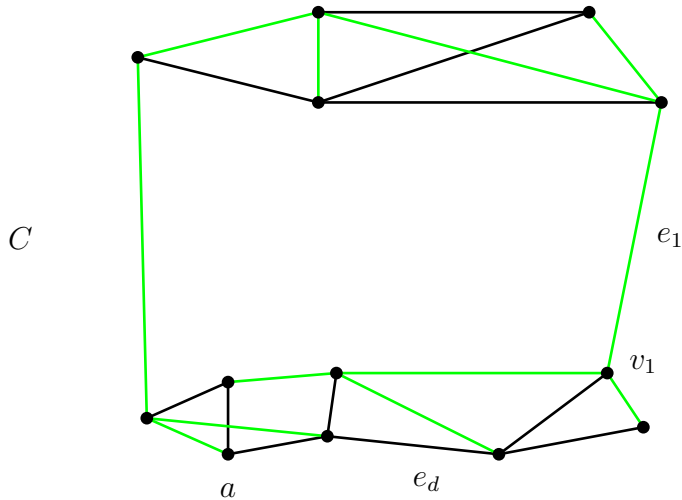

Since we have obtained a contradiction in each of the two cases, our assumption must 
have been wrong and Theorem~\ref{thm:1} is true.
\end{proof}

\section{Deducing Corollary~\ref{cor:2}} \label{sec:3}
 
The strengthening given by Corollary~\ref{cor:2} will now be deduced from 
Theorem~\ref{thm:1} by means of a short argument.

\begin{proof}[Proof of Corollary~\ref{cor:2}] Let $G=(V,E)$, $r\ge 0$, and $e_1, \dots ,e_r$ 
be as in Corollary~\ref{cor:2}. We call an integer $0\ls r'\ls r$ {\it restrained} 
if there is a partition $E=E_1\cup E_2\cup \dots \cup E_r$ such that 
$e_i\in E_i$ for $i\in [r']$ and $(V, E_i)$ is acyclic for all $i\in [r]$. 
By Theorem~\ref{thm:1} the integer $0$ is restrained. 
Corollary~\ref{cor:2} is equivalent to $r$ being restrained. 
It is therefore sufficient to prove the following: if an integer $k$ with $0\ls k\ls r-1$ 
is restrained, then the integer $k+1$ is also restrained.

Let $E=E_1\cup E_2\cup \dots \cup E_r$ be a partition such that $e_i\in E_i$ for $i\in [k]$ 
and $(V, E_i)$ is acyclic for all $i\in [r]$. If $e_{k+1}\in E_{k+1}$, we are done. 
We can therefore assume $e_{k+1}\in E_\ell$ with $\ell\neq k+1$. Obviously we can assume that 
the two vertices of $e_{k+1}$ belong to the same component of $(V, E_{k+1})$.
        
The two vertices of $e_{k+1}$ belong to different components of the forest 
$(V, E_\ell\sm\lbrace e_{k+1}\rbrace)$. Therefore the uniquely determined path between 
these two vertices in the forest $(V, E_{k+1})$ contains vertices of different 
components of $(V, E_\ell\sm\lbrace e_{k+1}\rbrace)$. Hence, there is an edge $e\in E_{k+1}$ 
of this path connecting vertices of different components of $(V, E_\ell\sm\lbrace e_{k+1}\rbrace)$. 
Then the graph $\bigl(V, (E_\ell\sm\lbrace e_{k+1}\rbrace)\cup \lbrace e\rbrace\bigr)$ is a forest. 
On the other hand the graph 
$\bigl(V, (E_{k+1}\sm\lbrace e\rbrace)\cup \lbrace e_{k+1}\rbrace\bigr)$ 
is by the definition of $e$ also a forest.

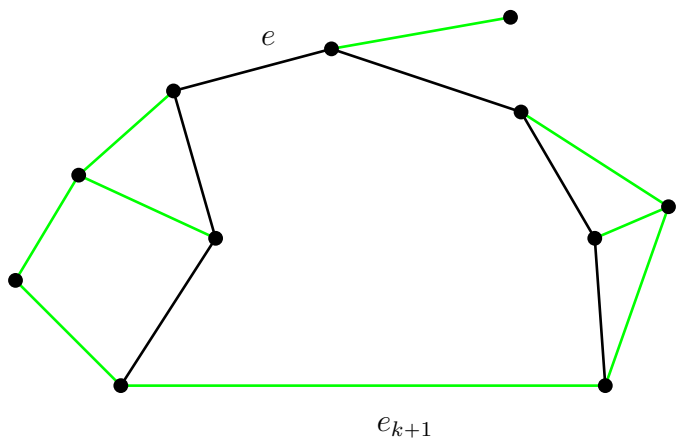
\begin{figure}[ht]
\centering
\begin{tikzpicture}[scale=1.4]
		
	\coordinate (r1) at (0, 1);
	\coordinate (r2) at (0.6, 2);
	\coordinate (r3) at (1, 0);
	\coordinate (r4) at (1.9,1.4);
	\coordinate (r5) at (1.5,2.8);
    \coordinate (r6) at (5.6,0);
    \coordinate (r7) at (5.5,1.4);	
    \coordinate (r8) at (6.2,1.7);	
    \coordinate (r9) at (4.8,2.6);	
    \coordinate (r10) at (3,3.2);
    \coordinate (r11) at (4.7,3.5);		
		
	\draw[line width=1pt] (r3) -- (r4) -- (r5) -- (r10) -- (r9) -- (r7) -- (r6);	
	\draw[green, line width=1pt] (r9) -- (r8) -- (r6) -- (r3) -- (r1) -- (r2) -- (r4);
	\draw[green, line width=1pt] (r2) -- (r5);
	\draw[green, line width=1pt] (r10) -- (r11);
	\draw[green, line width=1pt] (r7) -- (r8);

	\foreach \i in {r1, r2, r3, r4, r5, r6, r7, r8, r9, r10, r11}
			\fill  (\i) circle (2pt);
		
	\node at (2.4,3.3) {$e$};
	\node at (3.7, -0.4) {$e_{k+1}$};
\end{tikzpicture}
\caption{Switching $e$ and $e_{k+1}$}
\label{fig:2}
\end{figure}

Thus, the partition gained from $E=E_1\cup E_2\cup \dots \cup E_r$ by substituting 
$E_\ell$ by ${(E_\ell\sm\lbrace e_{k+1}\rbrace )\cup \lbrace e\rbrace}$ and $E_{k+1}$ by 
$(E_{k+1}\sm\lbrace e\rbrace)\cup \lbrace e_{k+1}\rbrace$ fulfills all conditions for 
$k+1$ being restrained.
\end{proof}

\begin{bibdiv}
\begin{biblist}

\bib{China}{article}{
   author={Chen, Boliong},
   author={Matsumoto, Makoto},
   author={Wang, Jian Fang},
   author={Zhang, Zhong Fu},
   author={Zhang, Jian Xun},
   title={A short proof of Nash-Williams' theorem for the arboricity of a
   graph},
   journal={Graphs Combin.},
   volume={10},
   date={1994},
   number={1},
   pages={27--28},
   issn={0911-0119},
   review={\MR{1273008}},
   doi={10.1007/BF01202467},
}

\bib{NW1}{article}{
   author={Nash-Williams, C. St. J. A.},
   title={Edge-disjoint spanning trees of finite graphs},
   journal={J. London Math. Soc.},
   volume={36},
   date={1961},
   pages={445--450},
   issn={0024-6107},
   review={\MR{0133253}},
}

\bib{NW2}{article}{
   author={Nash-Williams, C. St. J. A.},
   title={Decomposition of finite graphs into forests},
   journal={J. London Math. Soc.},
   volume={39},
   date={1964},
   pages={12},
   issn={0024-6107},
   review={\MR{0161333}},
}

\bib{Tut}{article}{
   author={Tutte, W. T.},
   title={On the problem of decomposing a graph into $n$ connected factors},
   journal={J. London Math. Soc.},
   volume={36},
   date={1961},
   pages={221--230},
   issn={0024-6107},
   review={\MR{0140438}},
}

\bib{Russen}{webpage}{
	url={http://www.artofproblemsolving.com/Forum/resources.php?c=143\&cid=61\&year=2007},
}
\end{biblist}
\end{bibdiv}
\end{document}